\documentclass[11pt]{amsart}

\usepackage{enumerate,url,amssymb,  mathrsfs}

\newtheorem{theorem}{Theorem}[section]
\newtheorem{lemma}[theorem]{Lemma}
\newtheorem*{lemma*}{Lemma}
\newtheorem{proposition}[theorem]{Proposition}

\theoremstyle{definition}

\theoremstyle{remark}
\newtheorem{remark}[theorem]{Remark}

\numberwithin{equation}{section}

\newcommand{\abs}[1]{\lvert#1\rvert}

\def\XXint#1#2#3{{\setbox0=\hbox{$#1{#2#3}{\int}$}
\vcenter{\hbox{$#2#3$}}\kern-.5\wd0}}

\def\le{\leqslant}
\def\ge{\geqslant}

\begin{document}


\title[Subharmonicity and quasiregular harmonic mappings]{Subharmonicity of the modulus of quasiregular harmonic mappings}
\subjclass{Primary 31A05; Secondary 31B10}

\keywords{Quasiregular harmonic mappings, Subharmonic functions}
\author{David Kalaj}
\address{University of Montenegro, Faculty of Natural Sciences and
Mathematics, Cetinjski put b.b. 8100 Podgorica, Montenegro}
\email{davidk@t-com.me} 
\author{ Vesna Manojlovi\' c}
\address{
University of Belgrade, Faculty of Organizational Sciences, Jove
Ili\'ca 154, Belgrade, Serbia} \email{vesnam@fon.rs}

\maketitle

\begin{abstract}
In this note we determine all numbers $q\in \mathbf R$ such that
$|u|^q$ is a subharmonic function, provided that $u$ is a
$K-$quasiregular harmonic mappings in an open subset $\Omega$ of the
Euclidean space $\mathbf R^n$.
\end{abstract}
\section{Introduction} By
$|\cdot |$ we denote the Euclidean norm in $\mathbf R^n$ and let
$\Omega$ be a region in  $\mathbf R^n$. In this paper we consider
$K$-quasiregular harmonic mappings, where $ K \ge 1$. We recall that
a harmonic mapping $u(x)=(u_1(x),\dots, u_n(x)):\Omega\to \mathbf
R^n$ with formal differential matrix
$$Du(x)=\{\partial_iu_j(x)\}_{i,j=1}^n$$ is $K$-quasiregular if
\begin{equation}\label{bege}K^{-1}|D u(x)|^n\le J_u(x)\le K l(D u(x))^n,\quad\text{for all $x\in \Omega$,}\end{equation} where $J_u$
is the Jacobian of $u$ at $x$, $$|D u|:=\max\{|Du(x) h|: |h|=1\},$$
and
$$l(D u):=\min\{|D u(x) h|: |h|=1\}.$$ See \cite[p.~128]{vo} for the definition of quasiregular mappings in more general setting. A quasiregular homeomorphism is called
quasiconformal.

Let $0<\lambda_1^2\le \lambda_2^2\le \dots\le \lambda_n^2$ be the
eigenvalues of the matrix $D u(x)D u(x)^t$. Here  $D u(x)^t$ is the
transpose of the matrix $Du(x)$.  Then
\begin{equation}\label{jaco}J_u(x) = \prod_{k=1}^n
\lambda_k,\end{equation} \begin{equation}|D
u|=\lambda_n\end{equation} and
\begin{equation}
l(D u)=\lambda_1.
\end{equation}
 For the Hilbert-Schmidt norm of the
matrix $D u(x)$, defined by $$\|Du(x)\|=\sqrt{\mathrm{Trace}(D u(x)D
u(x)^t)}$$ we have

\begin{equation}\label{clare}
\|Du(x)\|=\sqrt{\sum_{k=1}^n\frac{\partial u}{\partial
x_k}\bullet\frac{\partial u}{\partial
x_k}}=\sqrt{\sum_{k=1}^n\left|\frac{\partial u}{\partial
x_k}\right|^2}
\end{equation}
and
\begin{equation}\|Du(x)|=\sqrt{\sum_{k=1}^n \lambda_k^2}.
\end{equation}
Here $\bullet$ denotes the inner product between vectors. From
\eqref{bege}, for a quasiregular mapping we have
\begin{equation}\label{epa}\frac{\lambda_n}{\lambda_k},\frac{\lambda_k}{\lambda_1}\le
K,\ \ k=1,\dots,n.\end{equation} It is well known that if
$u=(u_1,\dots,u_n)$ is a harmonic mapping defined in a region
$\Omega$ of the Euclidean space $\mathbf R^n$, then $|u |^p$ is
subharmonic for $p\ge 1$, and that, in the general case, is not
subharmonic for $p < 1$. Let us prove this well-known fact. If $u$
is harmonic, then by a result in \cite[Lemma 1.4.]{jmaa} (see also
\cite[Eq.~(4.9)-(4.11)]{k})
$$\Delta|u|=|u|\left\|D\left(\frac{u}{|u|}\right)\right\|^2.$$ So $\Delta|u|\ge 0$ for those points $x$, such that $u(x)\neq
0$. If $u(a)=0$, then we consider the harmonic mapping
$u_m(x)=u(x)+(1/m,0,\dots,0)$. Then $u_m(a)\neq 0$, and $\Delta
|u_m(x)|\ge 0$ in some neighborhood of $a$. It follows from the
definition of subharmonic functions that the uniform limit of a
convergent sequence of subharmonic functions is still subharmonic.
Since $|u_m(x)|\to |u(x)|$, it follows that $|u|$ is subharmonic in
$a$. Since the function $g(s)=s^p$, is convex for $p\ge 1$, we
obtain that $|u|^p$ is subharmonic providing that $u$ is harmonic.
(For the above facts we refer to \cite[Chapter~2]{hk}).

Recently, several authors have proved the following two
propositions, which is the motivation for our study.
\begin{proposition}\label{pav}\cite{kpa} If f is a $K$-quasiregular harmonic map in a plane
domain, then $| f |^q$ is subharmonic for $q \ge 1-K^{-2}$.
\end{proposition}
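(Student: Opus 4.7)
The plan is to reduce subharmonicity of $\abs{f}^q$ to a pointwise linear inequality between $\norm{Df}^2$ and $\abs{\nabla\abs{f}}^2$ at points where $f\neq 0$, verify this inequality using the two-dimensional form of the quasiregularity condition \eqref{epa}, and handle zeros of $f$ by the perturbation argument already recalled in the introduction.

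For the pointwise identity, combining $\Delta\abs{f}^2=2\norm{Df}^2$ (valid for any harmonic $f$) with the expansion $\Delta\abs{f}^2=2\abs{f}\Delta\abs{f}+2\abs{\nabla\abs{f}}^2$ gives $\abs{f}\Delta\abs{f}=\norm{Df}^2-\abs{\nabla\abs{f}}^2$ wherever $f\neq 0$. The chain rule applied to $\abs{f}^q$ then yields
\begin{equation*}
\Delta\abs{f}^q=q\,\abs{f}^{q-2}\bigl(\norm{Df}^2-(2-q)\abs{\nabla\abs{f}}^2\bigr).
\end{equation*}
Since $K\ge 1$ forces $q\ge 1-K^{-2}\ge 0$, the prefactor is nonnegative and subharmonicity at nonvanishing points reduces to the pointwise bound $\norm{Df}^2\ge (2-q)\abs{\nabla\abs{f}}^2$.

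The upper estimate $\abs{\nabla\abs{f}}\le\abs{Df}=\lambda_2$ is immediate from the identity $\nabla\abs{f}=(Df)^t(f/\abs{f})$ applied to the unit vector $f/\abs{f}$ and the fact that the operator norm of $(Df)^t$ coincides with $\abs{Df}$. In dimension two, the bound $\lambda_1\ge K^{-1}\lambda_2$ from \eqref{epa} gives
\begin{equation*}
\norm{Df}^2=\lambda_1^2+\lambda_2^2\ge (1+K^{-2})\lambda_2^2\ge (1+K^{-2})\abs{\nabla\abs{f}}^2,
\end{equation*}
so the required inequality holds exactly when $2-q\le 1+K^{-2}$, i.e., precisely when $q\ge 1-K^{-2}$. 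For a point $x_0$ with $f(x_0)=0$, the argument applies to the perturbation $f_m=f+(1/m,0,\ldots,0)$: each $f_m$ is $K$-quasiregular harmonic and nonvanishing near $x_0$ for large $m$, so $\abs{f_m}^q$ is subharmonic near $x_0$ by the case just treated, and uniform convergence $\abs{f_m}^q\to\abs{f}^q$ transfers subharmonicity to $\abs{f}^q$.

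The only step in the plan that carries real content is the lower bound $\norm{Df}^2/\abs{Df}^2\ge 1+K^{-2}$ in dimension two; this matches the deficit $2-q$ in the chain-rule identity exactly, and so produces the threshold $1-K^{-2}$. Everything else is routine, and the planar hypothesis enters only through the fact that $\norm{Df}^2$ is a sum of exactly two squared singular values.
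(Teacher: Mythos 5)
Your proof is correct and follows essentially the same route as the paper's proof of Theorem~\ref{mai} specialized to $n=2$: your identity $\Delta|f|^q=q|f|^{q-2}(\|Df\|^2-(2-q)|\nabla|f||^2)$ is exactly the formula of Proposition~\ref{ste}, and your bound $|\nabla|f||\le\lambda_2$ together with $\|Df\|^2=\lambda_1^2+\lambda_2^2\ge(1+K^{-2})\lambda_2^2$ is precisely the paper's step \eqref{op}--\eqref{opo} with $n=2$. The perturbation argument at zeros is the one already recalled in the introduction, so nothing essential differs.
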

\begin{proposition}\label{amb}\cite{akb} If f is a $K$-quasiregular harmonic map in a
space domain, then $| f |^q$ is subharmonic for some
$q=q(K,n)\in(0,1)$.
\end{proposition}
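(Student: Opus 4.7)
The plan is to reduce the claim to a pointwise Laplacian inequality at any point where $f$ does not vanish, and then extend to the zero set by a perturbation argument of the sort already used in the excerpt for $p=1$.

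Fix $x_0$ with $f(x_0)\neq 0$ and set $\phi=|f|^2$. Since $f$ is harmonic, $\Delta\phi=2\|Df\|^2$ and $\nabla\phi=2\,Df^{\,t}f$. Writing $|f|^q=\phi^{q/2}$ and computing directly, for every $q>0$,
\[
\Delta|f|^q=\tfrac{q}{2}\,\phi^{q/2-2}\Bigl[(\tfrac{q}{2}-1)|\nabla\phi|^2+2\phi\,\|Df\|^2\Bigr].
\]
Hence $\Delta|f|^q\ge 0$ reduces to the pointwise estimate $q\ge 2-R(x)$, where
\[
R(x):=\frac{|f|^2\,\|Df\|^2}{|Df^{\,t}f|^2}.
\]

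The heart of the argument is a uniform lower bound on $R$ that uses the quasiregularity estimate \eqref{epa}. Since $|Df^{\,t}f|^2=f^{\,t}(DfDf^{\,t})f\le\lambda_n^2|f|^2$ by the spectral theorem,
\[
R(x)\ge\frac{\|Df\|^2}{\lambda_n^2}=\sum_{k=1}^n\left(\frac{\lambda_k}{\lambda_n}\right)^2\ge 1+\frac{n-1}{K^2},
\]
the last step isolating the $k=n$ term and using $\lambda_k/\lambda_n\ge 1/K$ from \eqref{epa}. Therefore $|f|^q$ is subharmonic on $\{f\neq 0\}$ whenever $q\ge 1-(n-1)/K^{2}$. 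The right-hand side is always strictly less than $1$; it is positive precisely when $K>\sqrt{n-1}$, and for $1\le K\le\sqrt{n-1}$ any $q\in(0,1)$ already works. In either case one selects some $q=q(K,n)\in(0,1)$.

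To handle the zero set of $f$, I would apply the same reasoning to $f_m(x)=f(x)+(1/m,0,\ldots,0)$. This map is still harmonic and has $Df_m=Df$, so it is $K$-quasiregular with the same $K$, and $|f_m|>0$ in a neighborhood of any prescribed point. Thus $\Delta|f_m|^q\ge 0$ there, and since $|f_m|^q\to|f|^q$ uniformly on compacta, $|f|^q$ is subharmonic on all of $\Omega$ as a uniform limit of subharmonic functions.

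\emph{Main obstacle.} All the substance sits in the lower bound for $R$: the Laplacian computation is routine, and the bound $|Df^{\,t}f|\le\lambda_n|f|$ is sharp when $f$ aligns with the top eigenvector of $DfDf^{\,t}$, so nothing can be recovered from the inner product itself. Every bit of saving must come from the spread of the singular values $\lambda_1\le\dots\le\lambda_n$, which is exactly what \eqref{epa} controls; this also explains why no such $q<1$ can work for general harmonic mappings.
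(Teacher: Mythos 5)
Your proof is correct and essentially reproduces the paper's own argument: your identity for $\Delta|f|^q$ is Proposition~\ref{ste}, and your lower bound $R\ge\|Df\|^2/\lambda_n^2=\sum_k(\lambda_k/\lambda_n)^2\ge 1+(n-1)/K^2$ via the singular-value ratios of \eqref{epa} is exactly how Theorem~\ref{mai} reaches the (sharper) threshold $q\ge 1-(n-1)/K^2$. One small repair to the last step: the translates $f_m$ have zeros of their own (which may accumulate at the point you care about as $m\to\infty$), so rather than invoking the uniform-limit argument it is cleaner to note that for $q>0$ the sub-mean-value inequality is automatic at any zero of $f$, since $|f|^q$ is nonnegative and vanishes there, and to combine this with subharmonicity on $\Omega\setminus f^{-1}(0)$.
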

This paper is continuation of \cite{akb} in which Proposition
\ref{pav} was extended to the $n$-dimensional setting. In \cite{akb}
the authors prove only the existence of an exponent $q\in (0,1)$
without giving the minimal value of $q.$  Here we improve
Proposition \ref{amb} by giving the optimal value of $q.$ Our proof
is completely different from those given in \cite{akb} and
\cite{kpa}. Moreover for the first time we consider the case $q<0$.

Our proof is based on the following well-known explicit computation.
\begin{proposition}\label{ste} \cite[Ch.~VII~3,~p.217]{stein}. Let $u = (u_1, \dots, u_n)
:\Omega\to
 \mathbf R^n,$ be
harmonic, let $\Omega_0 =\Omega\setminus u^{-1}(0)$, let $q \in
\mathbf R$. Then for $x\in\Omega_0$ $$\Delta |u|^q = q\left[
|u|^{q-2}\sum_{k=1}^n\left|\nabla u_k
\right|^2+(q-2)|u|^{q-4}\sum_{k=1}^n\left(u\bullet \frac{\partial
u}{\partial x_k}\right)\right].$$
\end{proposition}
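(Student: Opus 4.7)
The plan is to obtain the identity by direct pointwise differentiation on $\Omega_0$: since $|u(x)|>0$ there, the function $|u|^q$ is smooth for every $q\in\mathbf{R}$, so computing the Laplacian coordinate by coordinate is justified. I would introduce $r(x):=|u(x)|=\bigl(\sum_j u_j(x)^2\bigr)^{1/2}$ and reduce $\Delta(r^q)$ to derivatives of $r^2$ via the chain rule, with the harmonicity of $u$ entering only at the very end.

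First I compute
\[
\frac{\partial}{\partial x_k} r^2 = 2\, u \bullet \frac{\partial u}{\partial x_k},
\qquad
\frac{\partial}{\partial x_k} r^q = q\, r^{q-2}\, u\bullet \frac{\partial u}{\partial x_k}.
\]
Differentiating a second time with the product rule, and applying the chain rule to the factor $r^{q-2}$, yields
\[
\frac{\partial^2}{\partial x_k^2} r^q
= q(q-2)\, r^{q-4}\left(u\bullet\frac{\partial u}{\partial x_k}\right)^2
+ q\, r^{q-2}\left(\left|\frac{\partial u}{\partial x_k}\right|^2 + u\bullet \frac{\partial^2 u}{\partial x_k^2}\right).
\]
Summing over $k$, the term $u\bullet \sum_k \partial_k^2 u = u\bullet \Delta u$ vanishes by harmonicity, and $\sum_k |\partial_k u|^2 = \|Du\|^2 = \sum_k |\nabla u_k|^2$, which gives the stated identity (the summand in the second bracket being naturally read as $(u\bullet \partial_k u)^2$, a square that appears suppressed in the typesetting).

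There is no real obstacle in this argument; it is essentially bookkeeping. The two substantive points are that the restriction to $\Omega_0$ is needed precisely because $q$ may be non-positive, so that $r^q$ is not smooth, or not even defined, at zeros of $u$, and that harmonicity is invoked in exactly one place, to discard the $u\bullet \Delta u$ contribution. Everything else is a direct consequence of the chain rule applied to the power $r^q$.
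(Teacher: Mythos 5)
Your proof is correct and follows essentially the same route as the paper's: a direct chain-rule computation of $\partial_k^2 |u|^q$ (the paper writes $|u|^q=(u_1^2+\cdots+u_n^2)^{q/2}$ and differentiates twice coordinatewise, which is the same bookkeeping), with harmonicity used exactly once to discard $u\bullet\Delta u$. You are also right that the summand in the statement should be read as $\left(u\bullet\frac{\partial u}{\partial x_k}\right)^2$; the square is present in the paper's own derivation and is a typo in the displayed proposition.
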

\begin{proof}
 Write $v := |u|^q = (u_1^2 + \dots + u_n^2)^p,$ for $p := q /2$ . A direct
computation gives \[\begin{split}v_{x_1} &= p(u_1^2 + \dots +
u_n^2)^{p-1} \cdot (2u_1{u_1}_{x_1} + \dots + 2u_n {u_n}_{x1}) \\&=
q(u_1^2 + \dots + u_n^2)^{p-1} \cdot (u_1{u_1}_{x_1} + \dots + u_n
{u_n}_{x1}),\end{split}\] and further \[\begin{split}v_{x_1x_1} &=
q\{2(p - 1)(u_1^2 + \dots + u_n^2)^{p-2} \cdot (u_1{u_1}_{x_1} +
\dots + {u_n}_{x_1})^2+\\ &+ (u_1^2 + \dots + u_n^2)^{p-1}\cdot
[u_1{u_1}_{x_1x_1} + ({u_1}_{x_1})^2 + \dots + u_n {u_n}_{x_1x_1} +
({u_n}_{x_1})^2]\}.\end{split}\] Therefore
\[\begin{split}\Delta v &= v_{x_1x_1} + \dots + v_{x_nx_n}
\\&= q\{|u|^{q-2}[(u_1\Delta u_1 + \dots + u_n\Delta u_n) +
(\sum_{k=1}^n {u_1}_{x_k}^2 + \dots\\& \ \ \ \ \  + \sum_{k=1}^n
{u_n}_{x_k}^2 )] + (q - 2)|u|^{q-4}\sum_{k=1}^n \left(\sum_{j=1}^n
u_j {u_j}_{x_k}\right)^2\}
\\&= q\{|u|^{q-2}( \sum_{k=1}^n {u_1}_{x_k}^2 + \dots +\sum_{k=1}^n
{u_n}_{x_k}^2) + (q - 2)|u|^{q-4} \sum_{k=1}^n\left(\sum_{j=1}^n u_j
\cdot \frac{\partial u_j}{\partial x_k}\right)^2\} \\&=
q|u|^{q-4}\{|u|^2 \sum_{j=1}^n\left( \sum_{k=1}^n {u_j}_{x_k}^2
\right) + (q - 2) \sum_{k=1}^n \left(\sum_{j=1}^n u_j \cdot
\frac{\partial u_j}{\partial x_k}\right)^2\} \\&= q|u|^{q-4}\{|u|^2
\sum_{j=1}^n |\nabla u_j |^2 + (q - 2) \sum_{k=1}^n \left(u \bullet
\frac{\partial u}{\partial x_k}\right)^2\}.\end{split}\]
\end{proof}
\section{Main result}
\begin{theorem}\label{mai}
Let $u$ be $K-$quasiregular harmonic in $\Omega\subset \Bbb R^n$.
Then the mapping $g(x) =|u(x)|^q$ is subharmonic in

\begin{enumerate}\item{} $\Omega$ for $q\ge\max\{ 1-\frac{n-1}{K^2},0\}$;
\item{}
$\Omega\setminus u^{-1}(0)$, for $q\le 1-(n-1){K^2}$.
\end{enumerate}
Moreover for $1-(n-1){K^2}<q<1-\frac{n-1}{K^2}$, there exists a
$K$-quasiconformal harmonic mapping such that $|u|^q$ is not
subharmonic.
\end{theorem}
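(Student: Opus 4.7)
The plan is to reduce the subharmonicity question to a pointwise inequality in the singular values of $Du$. Using the SVD $Du=W\Sigma V^{t}$ with $\Sigma=\operatorname{diag}(\lambda_{1},\dots,\lambda_{n})$ and setting $y=V^{t}u$ (so that $|y|=|u|$), the identities $\sum_{k}|\nabla u_{k}|^{2}=\sum_{k}\lambda_{k}^{2}$ and $\sum_{k}(u\bullet\partial u/\partial x_{k})^{2}=|Du\,u|^{2}=\sum_{k}\lambda_{k}^{2}y_{k}^{2}$ turn Proposition~\ref{ste} into
\[
\Delta|u|^{q}=q\,|u|^{q-4}\left[|u|^{2}\sum_{k=1}^{n}\lambda_{k}^{2}-(2-q)\sum_{k=1}^{n}\lambda_{k}^{2}y_{k}^{2}\right]
\]
on $\Omega_{0}=\Omega\setminus u^{-1}(0)$. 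Since $\lambda_{1}^{2}|u|^{2}\le\sum_{k}\lambda_{k}^{2}y_{k}^{2}\le\lambda_{n}^{2}|u|^{2}$, the sign of the bracket is governed by the extreme alignments of $u$ relative to the singular directions of $Du$, and the quasiregularity bounds \eqref{epa} control everything that remains.

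For part (1) the cases $q\ge 2$ and $q=0$ are trivial. For $\max\{0,1-(n-1)/K^{2}\}\le q<2$ the worst alignment is $y=|u|e_{n}$, giving $\sum\lambda_{k}^{2}y_{k}^{2}=\lambda_{n}^{2}|u|^{2}$, so bracket-nonnegativity reduces to $\sum\lambda_{k}^{2}\ge(2-q)\lambda_{n}^{2}$. From $\lambda_{k}\ge\lambda_{n}/K$ (part of \eqref{epa}) I obtain $\sum\lambda_{k}^{2}\ge(1+(n-1)/K^{2})\lambda_{n}^{2}$, and the inequality is exactly $q\ge 1-(n-1)/K^{2}$. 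To extend subharmonicity from $\Omega_{0}$ to all of $\Omega$ I use the perturbation argument of the introduction: $u_{m}=u+m^{-1}e_{1}$ is harmonic and still $K$-quasiregular (a constant translate does not affect $Du$), nowhere zero on any prescribed compact set for $m$ large, so $|u_{m}|^{q}$ is subharmonic there; uniform convergence $|u_{m}|^{q}\to|u|^{q}$ (valid for $q>0$) then propagates subharmonicity across $u^{-1}(0)$.

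For part (2), $q\le 1-(n-1)K^{2}<0$ makes the prefactor $q|u|^{q-4}$ negative, so the bracket must be $\le 0$ on $\Omega_{0}$. The adverse alignment is now $y=|u|e_{1}$, yielding $\sum\lambda_{k}^{2}y_{k}^{2}=\lambda_{1}^{2}|u|^{2}$, and the requirement becomes $\sum\lambda_{k}^{2}\le(2-q)\lambda_{1}^{2}$. The other half of \eqref{epa}, $\lambda_{k}\le K\lambda_{1}$, gives $\sum\lambda_{k}^{2}\le(1+(n-1)K^{2})\lambda_{1}^{2}$, which is precisely $q\le 1-(n-1)K^{2}$.

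For sharpness in $1-(n-1)K^{2}<q<1-(n-1)/K^{2}$ I exhibit two linear quasiconformal harmonic diagonal examples that saturate \eqref{epa} at a chosen point. For $q\in(0,1-(n-1)/K^{2})$, take $u(x)=(x_{1},\dots,x_{n-1},Kx_{n})$ at $x_{0}=(0,\dots,0,1)$; the displayed formula collapses to $\Delta|u|^{q}(x_{0})=qK^{q-2}\bigl[(n-1)+(q-1)K^{2}\bigr]$, strictly negative on this range. For $q\in(1-(n-1)K^{2},0)$, take $u(x)=(x_{1},Kx_{2},\dots,Kx_{n})$ at $x_{0}=(1,0,\dots,0)$, giving $\Delta|u|^{q}(x_{0})=q\bigl[(n-1)K^{2}+q-1\bigr]$, again strictly negative on this range. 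The main conceptual step is the SVD rewrite of Proposition~\ref{ste} that brings the Laplacian into a form depending only on $|u|$, the $\lambda_{k}$, and the alignment vector $y$; once this is in place, \eqref{epa} applies directly and the remaining manipulations, including the explicit sharpness computations, are elementary.
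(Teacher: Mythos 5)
Your argument is in substance the paper's own: rewriting Proposition~\ref{ste} via the singular value decomposition is just a more explicit form of the paper's identification of $\sup_{|z|=1}\bigl|\sum_j z_j\nabla u_j\bigr|^2$ with $\lambda_n^2$ and $\inf_{|z|=1}\bigl|\sum_j z_j\nabla u_j\bigr|^2$ with $\lambda_1^2$, after which both proofs invoke \eqref{epa} in exactly the same way to obtain the thresholds $1-\frac{n-1}{K^2}$ and $1-(n-1)K^2$; your sharpness examples are the map \eqref{defa} and a rescaled, permuted version of \eqref{dela}, evaluated at the same kind of point, and your two explicit values $qK^{q-2}[(n-1)+(q-1)K^2]$ and $q[(n-1)K^2+q-1]$ check out.

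One step is wrong as written, although the conclusion it serves is true and trivial to obtain. To extend subharmonicity from $\Omega\setminus u^{-1}(0)$ to $\Omega$ in part (1) you assert that $u_m=u+m^{-1}e_1$ is nowhere zero on a prescribed compact set for $m$ large. That fails in general: the zero set of $u_m$ is $u^{-1}(-m^{-1}e_1)$, and since a nonconstant quasiregular map is open, any neighborhood of a point $a$ with $u(a)=0$ contains zeros of $u_m$ for all large $m$ (take $u=\id$ to see this concretely). No perturbation is needed here: for $q>0$ the function $|u|^q$ is continuous, nonnegative, and vanishes at every point of $u^{-1}(0)$, so the sub-mean-value inequality at such points holds trivially, and together with the inequality $\Delta|u|^q\ge 0$ on the complement this gives subharmonicity on all of $\Omega$. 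This is precisely why the paper (following \cite{akb}) only verifies the differential inequality on $\Omega_0$. With that one sentence substituted for the perturbation argument, your proof is correct and coincides with the paper's.
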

\begin{remark}
If $n=2$ then $1-\frac{n-1}{K^2}=1-K^{-2}.$ Thus Theorem~\ref{mai}
is an extension of Proposition~\ref{pav}.
\end{remark}
\begin{remark}
In the case $1\le K\le \sqrt{n-1}$ the function $|u|^q$ is
subharmonic for all $q>0.$
\end{remark}
\begin{proof}[Proof of Theorem~\ref{mai}] Let us fix such a map $u:\Omega\to \mathbf R^n$ and set $\Omega_0 = \Omega\setminus
u^{-1}\{0\}.$ We have to find all positive real numbers $q$ such
that $\Delta |u|^q\ge 0$ on $\Omega_0$. Since $u$ is quasiregular,
the set $Z = \{x\in \Omega_0 : \det Du(x) = 0\}$ has measure zero
(see \cite{vo}), it is also closed since $u$ is smooth. In
particular, $\Omega_1 = \Omega_0\setminus Z$ is dense in $\Omega_0$
and thus it suffices to prove that $\Delta |u|^q \ge 0$ on
$\Omega_1$. From Proposition~\ref{ste}, we obtain
\begin{equation}
\label{laplacian} \Delta |u|^q = q\left[ |u|^{q-2}\|D
u\|^2+(q-2)|u|^{q-4}\left|\sum_{j=1}^n u_j \nabla
u_j\right|^2\right].
\end{equation}
So we find all real $q$ such that

$$q\left(|u|^{q-2}\|D
u\|^2+(q-2)|u|^{q-4}\left|\sum_{j=1}^n u_j \nabla
u_j\right|^2\right)\ge 0.$$ If $q\ge 2$, then $\Delta |u|^q\ge 0$.
Assume that $q\ge 0$ and $q<2$ such that
$$ \left|\sum_{j=1}^n u_j(x) \nabla u_j(x)\right|^2\le \frac{1}{2-q}|u(x)|^2\|D
u(x)\|^2,\ \ \ x\in \Omega_1.$$

After normalization, we see that it suffices to find all constants
$q <2$ such that

\begin{equation}\label{op}\sup_{z\in S^{n-1}} \left|\sum_{j=1}^n z_j \nabla u_j(x)\right|^2\le
\frac{1}{2-q}\|D u(x)\|^2,\ \ \ x\in \Omega_1.\end{equation}

Let $0<\lambda_1^2\le \lambda_2^2\le \dots\le \lambda_n^2$ be the
eigenvalues of the matrix $D u(x)D u(x)^t$. Then
\begin{equation}\label{po}
\sup_{z\in S^{n-1}} \left|\sum_{j=1}^n z_j \nabla
u_j(x)\right|^2=\lambda_n^2
\end{equation}
\begin{equation}\label{jo}
\inf_{z\in S^{n-1}} \left|\sum_{j=1}^n z_j \nabla
u_j(x)\right|^2=\lambda_1^2\end{equation} and
\begin{equation}\label{yes}
\|D u(x)\|^2= \sum_{k=1}^n \lambda_k^2.
\end{equation}
Because $u$ is $K-$quasiregular from \eqref{epa} we have
\begin{equation}\label{qe}\frac{\lambda_n}{\lambda_k}\le K,\ \ k=1,\dots, n-1.\end{equation} Thus \eqref{op} can be written as
\begin{equation}\label{opo}\lambda_n^2\le \frac{1}{2-q}\sum_{k=1}^n \lambda_k^2.\end{equation}
By \eqref{yes} and \eqref{qe} we get that, the inequality
\eqref{opo} is satisfied whenever
\begin{equation}
\frac{1}{1+\frac{n-1}{K^2}}\le \frac{1}{2-q}
\end{equation}
i.e.
\begin{equation}
\max\left\{0,1-\frac{n-1}{K^2}\right\}\le q<2.
\end{equation}
If $q<0$, then we should have \begin{equation}\label{nop}\inf_{z\in
S^{n-1}} \left|\sum_{j=1}^n z_j \nabla u_j(x)\right|^2\ge
\frac{1}{2-q}\|D u(x)\|^2,\ \ \ x\in \Omega_1,\end{equation} i.e.
\begin{equation*}{2-q}\ge \sum_{k=1}^n
\frac{\lambda_k^2}{\lambda_1^2}.\end{equation*} Because $u$ is
$K-$quasiregular from \eqref{epa}
\begin{equation}\label{lew}\frac{\lambda_k}{\lambda_1}\le K,\ \ k=2,\dots, n.\end{equation}
Thus if
\begin{equation}\label{roma} q\le 1-(n-1)K^2,\end{equation} then
\eqref{nop} holds. To finish the proof we need the following lemma.
\begin{lemma}\label{le}
For any $1-(n-1){K^2}<q<1-\frac{n-1}{K^2}$ there is a (linear)
harmonic $K$-quasiconformal mapping $u$ such that $|u|^q$ is not
subharmonic.
\end{lemma}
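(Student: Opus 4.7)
The plan is to construct the required counterexamples as explicit diagonal linear maps. Take $u(x)=Ax$ with $A=\operatorname{diag}(\lambda_1,\ldots,\lambda_n)$ having positive entries; such a $u$ is automatically harmonic, and is $K$-quasiconformal precisely when $\max_k\lambda_k/\min_k\lambda_k\le K$ (the $\lambda_k$'s are then exactly the singular values of $Du$). With $u_j=\lambda_j x_j$ and $\nabla u_j=\lambda_j e_j$, Proposition~\ref{ste} collapses to the concrete formula
\[
\Delta|u|^q(x)=q\,|u(x)|^{q-4}\left[\Bigl(\sum_k\lambda_k^2 x_k^2\Bigr)\Bigl(\sum_k\lambda_k^2\Bigr)+(q-2)\sum_k\lambda_k^4 x_k^2\right].
\]
The strategy is to saturate the quasiconformality bound by a careful choice of the $\lambda_k$'s, then evaluate at a coordinate axis direction so that the bracket collapses to an expression linear in $q$ whose sign can be read off directly.

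For the subinterval $0<q<1-(n-1)/K^2$ (nonempty only when $K^2>n-1$), I take $\lambda_1=\cdots=\lambda_{n-1}=1$ and $\lambda_n=K$, and test at $x=e_n$. The bracket reduces to $K^2(n-1)+K^4(q-1)$, which is strictly negative precisely when $q<1-(n-1)/K^2$. Since $q>0$, the prefactor $q|u|^{q-4}$ is positive, and we conclude $\Delta|u|^q(e_n)<0$, so $|u|^q$ is not subharmonic at $e_n$.

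For the subinterval $1-(n-1)K^2<q<0$, I use the ``dual'' example $\lambda_1=1/K$, $\lambda_2=\cdots=\lambda_n=1$, and test at $x=e_1$. A parallel arithmetic produces a bracket proportional to $(n-1)K^2+(q-1)$, which is strictly positive exactly when $q>1-(n-1)K^2$. Since now $q<0$, the prefactor $q|u|^{q-4}$ flips the sign, so once again $\Delta|u|^q(e_1)<0$.

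Nothing really plays the role of a serious obstacle here. The only subtlety is sign-tracking: in the $q>0$ case the bracket must be made negative, whereas in the $q<0$ case it must be made positive, which forces the use of two different extremal diagonal matrices — a ``stretched'' one and a ``compressed'' one — and evaluation at the corresponding long or short coordinate axis. Once these two examples are identified, the verification is a routine substitution, and together they cover the full open range in the statement.
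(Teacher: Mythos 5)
Your proposal is correct and follows essentially the same route as the paper: the paper's counterexample for $q>0$ is exactly your $\mathrm{diag}(1,\dots,1,K)$ tested at $e_n$, and for $q<0$ it uses $\mathrm{diag}(1,\dots,1,1/K)$, which is your second example up to a permutation of coordinates. The computations agree, so no further comment is needed.
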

\begin{proof}[Proof of Lemma~\ref{le}]
Assume first that $q>0$. We will consider linear mapping $u:\mathbf
R^n\longrightarrow\mathbf R^n$ defined by
\begin{equation}\label{defa}u(x_1,\dots,x_{n-1},x_n)=(x_1,\dots,x_{n-1}, Kx_n),\end{equation} where
$K\ge 1$. It is obviously harmonic and $K-$quasiconformal. If we put
this mapping in formula (\ref{laplacian}) we get
$$
[(n-1)+K^2]|u|^2+(q-2)\left|\sum_{j=1}^{n-1}x_je_j+K^2e_nx_n\right|^2\geq
0
$$
which is equivalent to
$$
(n-1+K^2)\left[\sum_{n=1}^{j-1}x_j^2+K^2x_n^2\right]+(q-2)\left|\sum_{j=1}^{n-1}x_je_j+K^2e_nx_n\right|^2\geq
0.
$$
By choosing $x_1=\cdots=x_{n-1}=0$ and $x_n=1$, we obtain
$$
(n-1+K^2)K^2\geq (2-q)K^4
$$
which is equivalent to
$$
q\geq 1-\frac{n-1}{K^2}.
$$
For $q<0$ we consider the linear mapping $u:\mathbf
R^n\longrightarrow\mathbf R^n$ defined by
\begin{equation}\label{dela}u(x_1,\dots,x_{n-1},x_n)=(x_1,\dots,x_{n-1}, x_n/K).\end{equation}
\end{proof}
To finish the proof we only need to take $\tilde u=u|_{\Omega}$,
where $u$ is defined in \eqref{defa} respectively in \eqref{dela}.
\end{proof}
\subsection*{Acknowledgment}
We thank the referee for providing constructive comments and help in
improving the contents of this paper.

\end{document}